\theoremstyle{plain}                                       %
\newtheorem{thm}{\quad Theorem}                            %
\newtheorem{prop}[thm]{\quad Proposition}                  %
\theoremstyle{definition}                                  %
\newtheorem{defi}[thm]{\quad Definition}                   %
\newtheorem{rmk}[thm]{\quad Remark}                        %
\newcommand{\N}{{\Bbb N}}
\newcommand{\K}{{\Bbb K}}
\newcommand{\al}{{\alpha}}
\newcommand{\be}{{\beta}}
\newcommand{\g}{{\gamma}}
\newcommand{\om}{{\omega}}
\title{Pascal triangle, Stirling numbers and the unique invariance of the Euler characteristic}
\author{Ana Luz\'{o}n* and Manuel A. Mor\'{o}n**}
\begin{document}
\maketitle

\address{*Departamento de Matem\'{a}tica Aplicada a los
Recursos Naturales. E.T.S.I. Montes. Universidad Polit\'{e}cnica
de Madrid. 28040-Madrid, SPAIN.}

\email{anamaria.luzon@upm.es}

\address{ **Departamento de Geometría y
Topología. Facultad de Matemáticas. Universidad Complutense de
Madrid. 28040- Madrid, SPAIN.}

 \email{mamoron@mat.ucm.es}

\vspace{1cm}

\begin{abstract}
We use some basic properties of binomial and Stirling numbers to
 prove that the Euler characteristic is, essentially, the unique
 numerical topological invariant for compact polyhedra which can be
 expressed as a linear combination of the numbers of faces of
 triangulations. We obtain this result converting it into an
 eigenvalue problem.
\end{abstract}

\vspace{1cm}

%

%



\section{Introduction}

Following D. Eppstein in \cite{Eppstein} and the introduction of
N. Levitt in \cite{Levitt}, the Euler characteristic $\chi$ is the
best known as well as the most ancient topological invariant  and
the Euler formula $E-V+F=2$ is one of many theorems in mathematics
which are important enough as to be proved repeatedly in
surprisingly many different ways.

On this line the unique invariance of the Euler characteristic,
among linear combinations on the numbers of faces of
triangulations, is known and reproved time after time. Up to our
knowledge the first proof appeared in Mayer \cite{Mayer}. More
recently in \cite{Levitt}, \cite{Forman} and \cite{Roberts2002}
there are some related results. Very recently in \cite{Yu} this
result is strengthened in the framework of combinatorial manifolds
to non-linear functions on the number of faces.

This note was born after the simple observation that the
$f$-vectors of $n$-simplices, considered as abstract simplicial
complexes, can be placed forming an infinite lower triangular
matrix which is almost the Pascal triangle. This allowed us to use
the ideas in our previous works about Riordan matrices, see for
example \cite{2ways}, \cite{teo}, \cite{BanPas} and \cite{poly}.
This kind of matrices and the group structure were introduced in
\cite{Sha91} and \cite{Spr94}.

Our main idea is to consider the $\K$-linear action induced by
such a matrix of $f$-vectors on $\K[[x]]$, where we consider the
natural $\K$-linear space structure on the set of formal power
series with coefficients on a field $\K$ of characteristic zero.
Using only this matrix we obtain, in particular, the unique
homotopy invariance of the Euler characteristic among all possible
linear combinations on components of $f$-vectors.

After that we use the known formula, involving Stirling numbers,
of the change of $f$-vectors under barycentric subdivision to
prove  that the multiples of the Euler characteristic are the
unique invariant under barycentric subdivision in the class of all
$n$-simplices. Consequently we prove the unique topological
invariance of the Euler characteristic among all possible linear
combinations on components of $f$-vectors. We obtain this result
as a consequence of the description of the eigenspace associated
to the eigenvalue $1$ in the linear action induced by an infinite
matrix describing the variation of $f$-vectors under barycentric
subdivisions.

\section{The results.}

We suppose that the basic definitions of abstract and geometric
finite simplicial complexes are known. See the corresponding
introductory chapters in \cite{Stanley} or \cite{Zielgler}. We
call herein the $f$-vector of an $n$-dimensional finite simplicial
complex $\mathcal{F}$ to $(f_0, f_1, \cdots, f_n)$, where $f_i$
counts the number of $i$-faces of $\mathcal{F}$. So $f_0$ is the
number of vertices, $f_1$ is the number of edges, and so on. We
have to note that in other places the $f$-vector includes
$f_{-1}=1$ as its first coordinate which corresponds to the
interpretation of the empty set as the unique $(-1)$-dimensional
face.

The basic pieces to construct polyhedra in Topology are the
geometric $n$-simplices, $\Delta_n$. Topologically they can be
described as the convex hull of $n+1$ affinely independent points
in a suitable euclidean space.

The abstract description of an $n$-simplex as a simplicial complex
is given by considering all the non-empty subsets of a  set of
vertices $V=\{v_0, \cdots, v_n\}$ with $n+1$ points. So, the
corresponding $f$-vectors are easily computed using combinatorial
numbers. If we denote by $f^{\Delta_n}$ the $f$-vector of
$\Delta_n$ we get
\[
f^{\Delta_n}=\left(\binom{n+1}{1},\binom{n+1}{2},\cdots\binom{n+1}{n+1}\right)
\]

We can place these vectors $f^{\Delta_n}$ forming an infinite
lower triangular matrix,
$\displaystyle{F=\left(\binom{n+1}{k+1}\right)_{n,k\geq0}}$. We
note that the non-null part coincides with Pascal's triangle
without the first row and column.


This matrix as well as the corresponding matrix representation of
Pascal's triangle, and some of its generalizations, are elements
of a group under the usual product of matrices. This group is
known as the Riordan group. We approached this group in
\cite{teo}, see also \cite{2ways}, using the Banach Fixed Point
Theorem. To describe the elements $T(\be\mid\al)$ in this group as
in \cite{teo}, we use a pair of formal power series
$\al(x)=\sum_{n\geq0}\al_nx^n$ and $\be(x)=\sum_{n\geq0}\be_nx^n$
such that $\al_0\neq0$ and $\be_0\neq0$. The columns of
$T(\be\mid\al)$ are the coefficients of the elements in the
geometric progression, in $\K[[x]]$, whose first term is the
series $\displaystyle{\frac{\be(x)}{\al(x)}}$ and the ratio is the
series $\displaystyle{\frac{x}{\al(x)}}$.

The representations of the product and the inverse in this group
are:

\[T(\be\mid \al)T(\bar{\be}\mid\bar{ \al})=T(\tilde{\be}\mid\tilde{\al})\]
where
\[
\tilde{\be}(x)=\be(x)\bar{\be} \left(\frac{x}{\al(x)}\right),
\qquad
\tilde{\al}(x)=\al(x)\bar{\al}\left(\frac{x}{\al(x)}\right)
\]

\[(T(\be\mid\al))^{-1}=T\left(\frac{1}{\be(\om^{-1})}\Big| \frac{1}{\al(\om^{-1})}\right), \qquad
\om=\frac{x}{\al}, \qquad \om\circ \om^{-1}=\om^{-1}\circ \om=x
\]
Besides, we can consider the  matrix $T(\be\mid\al)$, like in
Linear Algebra, as the associated matrix to a $\K$-linear isometry
for a suitable ultrametric $d$, see \cite{teo}, defined by:
\begin{equation}\label{E:apl}
    \begin{matrix}
T(\be \mid \al):&(\K[[x]],d)&\rightarrow&(\K[[x]],d)\\
&\g&\mapsto&T(\be\mid
\al)(\g)=\frac{\be}{\al}\g\left(\frac{x}{\al}\right)
\end{matrix}
\end{equation}

In this terms Pascal's triangle is $T(1\mid 1-x)$ and our matrix
of $f$-vectors is
$\displaystyle{F=T\left(\frac{1}{1-x}\Big|1-x\right)}$, because we
obtain $F$ from Pascal's triangle by deleting the first row and
column, see page 3614 in \cite{2ways}.

Given a $m$-dimensional simplicial complex $\mathcal{F}$ with
$f$-vector
$\displaystyle{f^{\mathcal{F}}=\left(f_0^{\mathcal{F}},f_1^{\mathcal{F}},
\cdots f_m^{\mathcal{F}}\right)}$ the Euler characteristic is
defined by
\[
\chi(\mathcal{F})=\sum_{k=0}^{m}(-1)^kf_k^{\mathcal{F}}.
\]
Consider the geometric realization $|\mathcal{F}|$ of
$\mathcal{F}$. It is known that $\chi(\mathcal{F})$ depends on
$|\mathcal{F}|$ but not on the triangulation $\mathcal{F}$. Even
more, $\chi(\mathcal{F})$ depends only on the homotopy type of
$|\mathcal{F}|$ because it can be expressed in terms of the ranks
of the homology groups $H_k(|\mathcal{F}|)$ which are homotopy
invariants. This previous result is the so called Euler-Poincaré
formula. See page 146 in \cite{Hatcher}.

Note that $\chi(\mathcal{F})$ can be expressed as the following
product of
infinite matrices : 

\begin{small}
\[
\chi(\mathcal{F})=(f_0^{\mathcal{F}},f_1^{\mathcal{F}},\cdots,f_m^{\mathcal{F}},0,\cdots)\begin{smallmatrix} \left(%
\begin{array}{c}
  1 \\
  -1 \\
  1\\
  -1 \\
  \vdots \\
   (-1)^m \\
    \vdots \\
\end{array}%
\right)
\end{smallmatrix}
\]
\end{small}
and the column matrix does not depend on the complex
$\mathcal{F}$. The generating function of this column matrix is
$\displaystyle{\frac{1}{1+x}}$ then
\[
T\left(\frac{1}{1-x}\Big|1-x\right)\left(\frac{1}{1+x}\right)=
\sum_{n\geq0}\chi(\Delta_n)x^n
\]
because the rows of the matrix $F$ are the vectors $f^{\Delta_n}$.

On the other hand, using (\ref{E:apl}) we get
\[
T\left(\frac{1}{1-x}\Big|1-x\right)\left(\frac{1}{1+x}\right)
=\frac{1}{(1-x)^2}\frac{1}{1+\frac{x}{1-x}}=\frac{1}{1-x}
\]
then
\[
\sum_{n\geq0}\chi(\Delta_n)x^n=\frac{1}{1-x} \qquad
\text{equivalently}\qquad \chi(\Delta_n)=1 \quad \forall n\geq0
\]
as everybody knows. Note that there is not any topology in the
above computation.

The Euler characteristic is, on one hand, a homotopy invariant in
the class of finite polyhedra and, on the other hand, it is a
linear combination on the number of faces of any triangulation of
the polyhedron. A natural way to define linear combinations on the
number of faces non-depending on the polyhedron even on its
dimension is the following

\begin{defi}
Let $\displaystyle{\g(x)=\sum_{n\geq0}\g_n x^n}$ be any power
series with coefficients in $\K$. Suppose that $\mathcal{F}$ is a
finite simplicial complex with $f$-vector
$(f_0^{\mathcal{F}},\cdots,f_m^{\mathcal{F}}, 0, \cdots)$, we
define the linear combination induced by the series $\g$, and
denote it by $\chi(\g,\mathcal{F})$, as
\[
\chi(\g,\mathcal{F})=\sum_{k=0}^m\g_kf_k^{\mathcal{F}}.
\]
So, the Euler characteristic is
\[
\chi\left(\frac{1}{1+x},\mathcal{F}\right).
\]
\end{defi}

We want to prove that the Euler characteristic is the unique
linear combination which is homotopy invariant. For this propose
we define

\begin{defi}
Let $\g\in\K[[x]]$, $\displaystyle{\g(x)=\sum_{n\geq0}\g_nx^n}$ be
any power series. We say that

(a) $\g(x)$ is a homotopy invariant for the class of finite
simplicial complexes if given any two of them $\mathcal{F}_1$ and
$\mathcal{F}_2$, then
$\chi(\g,\mathcal{F}_1)=\chi(\g,\mathcal{F}_2)$ provided
$|\mathcal{F}_1|$ and  $|\mathcal{F}_2|$ have the same homotopy
type.

(b) $\g(x)$ is a topological invariant for the class of finite
simplicial complexes if given any two of them $\mathcal{F}_1$ and
$\mathcal{F}_2$, then
$\chi(\g,\mathcal{F}_1)=\chi(\g,\mathcal{F}_2)$ provided
$|\mathcal{F}_1|$ and  $|\mathcal{F}_2|$ are homeomorphic.
\end{defi}

Of course, any series which is a homotopy invariant is a
topological invariant. We can restrict the above definition to any
subclass of finite simplicial complexes. Using, essentially, the
Pascal triangle we get
\begin{thm}
The unique series which are homotopy invariants for the class of
all dimensional euclidean closed balls (then for the class of all
polyhedra) are $\displaystyle{\frac{k}{1+x}}$, $k\in\K$.
\end{thm}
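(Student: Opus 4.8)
The plan is to convert homotopy invariance into a single linear equation for the matrix $F=T\left(\frac{1}{1-x}\mid 1-x\right)$ and then invert. First I would use that every euclidean closed ball $D^n$ is contractible and that $\Delta_n$ is homeomorphic to $D^n$; hence all the standard simplices $\Delta_n$ have one and the same homotopy type. If $\g$ is a homotopy invariant for the class of all balls, it follows that $\chi(\g,\Delta_n)$ cannot depend on $n$, so there is a constant $c\in\K$ with $\chi(\g,\Delta_n)=c$ for every $n\geq0$.

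Next I would read this constancy off the rows of $F$. Since the $n$-th row of $F$ is exactly $f^{\Delta_n}=\left(\binom{n+1}{1},\dots,\binom{n+1}{n+1}\right)$, the $n$-th coefficient of the series $F(\g)$ equals $\sum_{k=0}^n\binom{n+1}{k+1}\g_k=\chi(\g,\Delta_n)$. Therefore the invariance hypothesis is equivalent to
\[
F(\g)=\sum_{n\geq0}c\,x^n=\frac{c}{1-x},
\]
an identity in $\K[[x]]$ in which no topology remains.

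The last step is inversion. As an element of the Riordan group, $F$ is invertible and in particular injective on $\K[[x]]$. The computation already performed in the introduction gives $F\left(\frac{1}{1+x}\right)=\frac{1}{1-x}$, so by $\K$-linearity $F\left(\frac{c}{1+x}\right)=\frac{c}{1-x}$; injectivity of $F$ then forces $\g=\frac{c}{1+x}$. This proves that these are the only possible homotopy invariants. Conversely each $\frac{c}{1+x}$ induces the functional $\mathcal{F}\mapsto c\,\chi(\mathcal{F})$, which is a genuine homotopy invariant by the Euler--Poincar\'e formula; this establishes sufficiency and, because a homotopy invariant for all polyhedra is in particular one for balls, extends the classification from balls to all polyhedra.

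I would expect the only delicate point to be the bookkeeping of the second step---checking that the triangular, shifted-Pascal structure of $F$ really identifies the coefficients of $F(\g)$ with the numbers $\chi(\g,\Delta_n)$. The inversion itself is immediate once the Riordan-group invertibility of $F$ is invoked, and after the reduction the whole statement is pure linear algebra over $\K[[x]]$ with no analytic or topological obstruction remaining.
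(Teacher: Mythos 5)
Your proposal is correct and follows essentially the same route as the paper: reduce homotopy invariance to the identity $T\left(\frac{1}{1-x}\Big|1-x\right)(\g)=\frac{c}{1-x}$ via the rows of the matrix being the $f$-vectors $f^{\Delta_n}$, then conclude $\g=\frac{c}{1+x}$ from invertibility in the Riordan group. The only (harmless) differences are that you invoke injectivity of $F$ together with the forward computation $F\left(\frac{1}{1+x}\right)=\frac{1}{1-x}$ instead of applying the explicit inverse $T\left(\frac{1}{1+x}\Big|1+x\right)$ as the paper does, and that you spell out the sufficiency direction via the Euler--Poincar\'e formula, which the paper leaves implicit.
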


In other words, the multiples of the Euler characteristic are the
unique linear combinations on the components of the $f$-vectors of
finite simplicial complexes which are homotopy invariant.

\begin{proof}
Suppose a series $\displaystyle{\g(x)=\sum_{n\geq0}\g_nx^n}$ which
is a homotopy invariant for the class of closed euclidean balls.
Consider the Riordan matrix
$\displaystyle{T\left(\frac{1}{1-x}\Big|1-x\right)}$ whose rows
are specific triangulations of the euclidean balls. Then
\[
T\left(\frac{1}{1-x}\Big|1-x\right)(\g(x))=\frac{k}{1-x}
\]
for some $k\in\K$ because $\chi(\g,\Delta_n)=k$ for any $n\geq0$
from the homotopy invariance of $\g$. Now, in the Riordan group,
\[
T^{-1}\left(\frac{1}{1-x}\big|1-x\right)=T\left(\frac{1}{1+x}\Big|1+x\right)
\]
consequently
\[
\g(x)=T\left(\frac{1}{1+x}\Big|1+x\right)\left(\frac{k}{1-x}\right)=\frac{k}{1+x}
\]
and the proof is finished.
\end{proof}

\begin{rmk}
(a) The same proof is valid in the more restrictive framework of
simple homotopy theory, see \cite{Cohen}.

(b) We have really proved that the unique linear combination of
the number of faces  which assigns the same number to every
abstract $n$-simplex is the Euler characteristic. This is related
to one of the conditions imposed in \cite{Forman} because all of
them are cones.
\end{rmk}

In \cite{Brenti} the authors treat $f$-vectors of barycentric
subdivision of simplicial complexes to get, in particular, that
certain limiting behavior depending on the iteration of the
barycentric subdivision of a simplicial complex, does not depend
on the complex itself but on the dimension of such complex. In
that paper a formula for the variation of the $f$-vector after a
barycentric subdivision is given.

Now we reproduce the formula at page 850 in \cite{Brenti} taking
into account that we use the $f$-vector $(f_0,f_1,\cdots,f_m)$ and
not the extended $f$-vector $(f_{-1},f_0,f_1,\cdots,f_m)$.

\begin{prop}\label{P:bary}
Let $\mathcal{F}$ be a $m$-dimensional simplicial complex. Denote
by $sd(\mathcal{F})$ the complex obtained by the barycentric
subdivision of $\mathcal{F}$. Then
\[
f_j^{sd(\mathcal{F})}=\sum_{i=0}^mf_i^{\mathcal{F}}(j+1)!\left\{\begin{smallmatrix}i+1\\
j+1\end{smallmatrix}\right\}\qquad \text{for} \qquad j=0,\cdots,m
\]
\end{prop}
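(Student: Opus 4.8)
The plan is to count the $j$-dimensional simplices of $sd(\mathcal{F})$ directly and to organize the count by the largest member of each chain. Recall that the vertices of $sd(\mathcal{F})$ are the barycenters of the faces of $\mathcal{F}$, and a set of such barycenters spans a simplex of $sd(\mathcal{F})$ precisely when the corresponding faces form a chain under inclusion. Thus a $j$-simplex of $sd(\mathcal{F})$ is the same datum as a strictly increasing chain $\sigma_0\subsetneq\sigma_1\subsetneq\cdots\subsetneq\sigma_j$ of nonempty faces of $\mathcal{F}$, that is, a flag of length $j+1$ in the face poset of $\mathcal{F}$. First I would record this identification, so that computing $f_j^{sd(\mathcal{F})}$ reduces to counting such flags.

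Next I would partition the flags according to their top element $\sigma_j$. Fix a face $\sigma$ of $\mathcal{F}$ of dimension $i$, so that $\sigma$ has exactly $i+1$ vertices. The flags whose top is $\sigma$ are exactly the chains $\sigma_0\subsetneq\cdots\subsetneq\sigma_{j-1}\subsetneq\sigma$ of nonempty subsets of the vertex set of $\sigma$. The key step is to count these. Writing $B_0=\sigma_0$ and $B_t=\sigma_t\setminus\sigma_{t-1}$ for $t=1,\ldots,j$ produces an ordered partition of the $(i+1)$-element vertex set of $\sigma$ into $j+1$ nonempty blocks, and this assignment is a bijection. Since the number of ordered partitions of an $n$-element set into $k$ nonempty blocks is $k!\left\{\begin{smallmatrix}n\\k\end{smallmatrix}\right\}$ (each unordered set partition into $k$ blocks can be ordered in $k!$ ways), the number of flags with top $\sigma$ equals $(j+1)!\left\{\begin{smallmatrix}i+1\\j+1\end{smallmatrix}\right\}$, a quantity depending only on the dimension $i$ of $\sigma$ and not on the particular face.

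Finally I would sum over all faces of $\mathcal{F}$, grouping them by dimension. There are $f_i^{\mathcal{F}}$ faces of dimension $i$, each contributing the same count $(j+1)!\left\{\begin{smallmatrix}i+1\\j+1\end{smallmatrix}\right\}$, and $i$ ranges over $0,\ldots,m$. Adding these contributions yields
\[
f_j^{sd(\mathcal{F})}=\sum_{i=0}^m f_i^{\mathcal{F}}(j+1)!\left\{\begin{smallmatrix}i+1\\j+1\end{smallmatrix}\right\},
\]
which is the asserted formula. The main obstacle is the counting in the middle step; once the bijection with ordered set partitions is established, the appearance of the Stirling numbers of the second kind and of the factor $(j+1)!$ is forced, and the remainder is bookkeeping. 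I would also observe that the constraint $j\le i$ is handled automatically, since $\left\{\begin{smallmatrix}i+1\\j+1\end{smallmatrix}\right\}=0$ whenever $j>i$, so faces of dimension smaller than $j$ contribute nothing to the sum.
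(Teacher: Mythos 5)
Your proof is correct, but note that the paper itself offers no proof of this proposition: it simply reproduces the formula from page~850 of the Brenti--Welker paper cited as \cite{Brenti}, adjusting only for the convention that the $f$-vector here omits the entry $f_{-1}$. So there is no internal argument to compare yours against; what you have done is supply the combinatorial content the paper outsources. Your argument is the standard and correct one: the identification of $j$-simplices of $sd(\mathcal{F})$ with chains $\sigma_0\subsetneq\cdots\subsetneq\sigma_j$ of nonempty faces, the partition of such chains by their top face $\sigma$ (which implicitly uses that every nonempty subset of a face is again a face, so the chains below $\sigma$ are exactly chains of nonempty subsets of its vertex set), the bijection with ordered set partitions via $B_0=\sigma_0$, $B_t=\sigma_t\setminus\sigma_{t-1}$, and the count $(j+1)!\left\{\begin{smallmatrix}i+1\\ j+1\end{smallmatrix}\right\}$ of ordered partitions of an $(i+1)$-set into $j+1$ nonempty blocks. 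Your closing remark that $\left\{\begin{smallmatrix}i+1\\ j+1\end{smallmatrix}\right\}=0$ whenever $j>i$ is also the right way to see that summing over all $i=0,\dots,m$ is harmless. What your route buys is self-containedness: the paper's two uniqueness theorems for the Euler characteristic both lean on this proposition, and with your argument in place the entire chain of reasoning no longer depends on the external reference.
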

In the above result $\displaystyle{\left\{\begin{smallmatrix}k\\
l\end{smallmatrix}\right\}}$ represents the corresponding Stirling
number of the second kind as denoted in \cite{Knuth} Chapter 6.

Let  $B=(b_{i,j})_{i,j\geq0}$ be the matrix with $b_{i,j}=(j+1)!\left\{\begin{smallmatrix}i+1\\
j+1\end{smallmatrix}\right\}$ $i,j\geq0$.
The formula in the proposition above converts to
\begin{small}
\[
(f_0^{sd(\mathcal{F})},f_1^{sd(\mathcal{F})},\cdots,f_m^{sd(\mathcal{F})},0,\cdots)=(f_0^{\mathcal{F}},f_1^{\mathcal{F}},\cdots,f_m^{\mathcal{F}},0,\cdots)
\left(%
\begin{array}{ccccc}
  \left\{\begin{smallmatrix}1\\
1\end{smallmatrix}\right\}&  &  &    \\
  \left\{\begin{smallmatrix}2\\
1\end{smallmatrix}\right\} & 2!\left\{\begin{smallmatrix}2\\
2\end{smallmatrix}\right\} &  &    \\
  \vdots & \vdots & \ddots &    \\
  \left\{\begin{smallmatrix}n\\
1\end{smallmatrix}\right\} & 2!\left\{\begin{smallmatrix}n\\
2\end{smallmatrix}\right\} & \cdots & n!\left\{\begin{smallmatrix}n\\
n\end{smallmatrix}\right\}&  \\
   \vdots & \vdots & \vdots & \vdots  & \ddots \\
\end{array}%
\right)
\]
\end{small}

%
%
By the usual way $B$ induces a $\K$-linear isomorphism $B:
\K[[x]]\rightarrow\K[[x]]$, $B(\zeta(x))=\eta(x)$, such that if
\[
\zeta(x)=\sum_{n\geq0}\zeta_nx^n, \qquad \text{and} \qquad
\eta(x)=\sum_{n\geq0}\eta_nx^n
\]
then
\[
B(\zeta_n)^t=(\eta_n)^t
\]
For this operator we get
\begin{prop}
The number $1$ is an eigenvalue for the operator $B$. Moreover the
eigenspace associated to $1$ is
$\displaystyle{\left\{\frac{c}{1+x}, \ c\in\K\right\}}$.
\end{prop}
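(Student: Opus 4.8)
The plan is to handle the statement in two independent steps: first verify that $\frac{1}{1+x}$ spans a line of eigenvectors for the eigenvalue $1$, and then exploit the lower triangular shape of $B$ to show that this line is the entire eigenspace. Writing $\zeta(x)=\sum_{n\geq0}\zeta_nx^n$, the condition $B(\zeta)=\zeta$ is the system $\sum_{j=0}^{i}(j+1)!\left\{\begin{smallmatrix}i+1\\ j+1\end{smallmatrix}\right\}\zeta_j=\zeta_i$ for every $i\geq0$, where the sum stops at $j=i$ because $\left\{\begin{smallmatrix}i+1\\ j+1\end{smallmatrix}\right\}=0$ for $j>i$.

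For the existence step I would substitute the coefficients of $\frac{1}{1+x}$, namely $\zeta_n=(-1)^n$. Reindexing by $k=j+1$ and $m=i+1$, the $i$-th equation becomes $\sum_{k=1}^{m}(-1)^{k}k!\left\{\begin{smallmatrix}m\\ k\end{smallmatrix}\right\}=(-1)^{m}$. This is precisely the fundamental expansion of a power in terms of falling factorials, $x^{m}=\sum_{k}\left\{\begin{smallmatrix}m\\ k\end{smallmatrix}\right\}x(x-1)\cdots(x-k+1)$, evaluated at $x=-1$: indeed $(-1)(-2)\cdots(-k)=(-1)^{k}k!$ and $\left\{\begin{smallmatrix}m\\ 0\end{smallmatrix}\right\}=0$ for $m\geq1$, so the right hand side collapses to $(-1)^m$. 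Hence $\frac{1}{1+x}$ is an eigenvector and $1$ is an eigenvalue.

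For the uniqueness step I would read the system line by line using the triangularity. The diagonal entry is $b_{i,i}=(i+1)!\left\{\begin{smallmatrix}i+1\\ i+1\end{smallmatrix}\right\}=(i+1)!$, so the $i$-th equation rearranges to $\big((i+1)!-1\big)\zeta_i=-\sum_{j<i}(j+1)!\left\{\begin{smallmatrix}i+1\\ j+1\end{smallmatrix}\right\}\zeta_j$. For $i=0$ the coefficient $(0+1)!-1=0$ makes the equation an identity, so $\zeta_0$ is a free parameter; for every $i\geq1$ one has $(i+1)!-1\neq0$, hence $\zeta_i$ is forced by $\zeta_0,\dots,\zeta_{i-1}$. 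Thus the space of solutions is one dimensional and parametrized by $\zeta_0$. Combining with the existence step, it coincides with $\left\{\frac{c}{1+x}:c\in\K\right\}$.

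The only genuinely nontrivial point is the combinatorial identity in the existence step; the rest is triangular back-substitution. I expect the main obstacle to be recognizing that the alternating sum of $k!\left\{\begin{smallmatrix}m\\ k\end{smallmatrix}\right\}$ is nothing but the falling-factorial expansion evaluated at $x=-1$, after which both the eigenvector and the one dimensionality of the eigenspace follow at once, the latter precisely because the value $1$ occurs on the diagonal of $B$ only in the very first position.
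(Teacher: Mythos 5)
Your proof is correct, and it reaches the statement by a genuinely different route than the paper at both steps. For existence, the paper factors $B=SD$, with $D$ the diagonal matrix of factorials and $S$ the matrix of Stirling numbers of the second kind, and then verifies $B\left(\frac{1}{1+x}\right)=\frac{1}{1+x}$ by passing to the inverse $S^{-1}=\left((-1)^{i-j}\left[\begin{smallmatrix}i+1\\ j+1\end{smallmatrix}\right]\right)_{i,j}$ and using the identities $\left[\begin{smallmatrix}i\\ 0\end{smallmatrix}\right]=0$ and $\sum_{j}\left[\begin{smallmatrix}i\\ j\end{smallmatrix}\right]=i!$ for Stirling numbers of the first kind; you instead verify the eigenvector equation directly from the Newton expansion $x^{m}=\sum_{k}\left\{\begin{smallmatrix}m\\ k\end{smallmatrix}\right\}x(x-1)\cdots(x-k+1)$ evaluated at $x=-1$, which is the same combinatorial content in disguise but avoids Stirling numbers of the first kind and any matrix inversion. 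For uniqueness, the paper restricts to the finite central submatrices $B_m$, observes that each has a one-dimensional eigenspace for the eigenvalue $1$ (the diagonal entries are distinct, with $1$ occurring only in position $(0,0)$), and then derives a contradiction from two linearly independent truncations of a putative second eigenvector; your back-substitution $\bigl((i+1)!-1\bigr)\zeta_i=-\sum_{j<i}b_{i,j}\zeta_j$ reaches the same conclusion more directly, making explicit that $\zeta_0$ is the only free parameter because $(i+1)!-1$ vanishes exactly when $i=0$ (note that both arguments quietly use that $\K$ has characteristic zero, so these integers are nonzero in $\K$). Your route is the more elementary and self-contained of the two; what the paper's buys is the structural factorization $B=SD$ and the first/second-kind Stirling duality, which ties this proposition to the Riordan-matrix machinery used throughout the rest of the paper.
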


\begin{proof} If $D=(d_{i,j})$ is  the diagonal matrix  with $d_{i,i}=(i+1)!$
and $S=\left(\left\{\begin{smallmatrix}i+1\\
j+1\end{smallmatrix}\right\}\right)_{i,j\in\N}$ is the  matrix of
the Stirling numbers of second kind, then $B=SD$. Consider
\[
\delta(x)=D\left(\frac{1}{1+x}\right)=\sum_{i\geq0}(i+1)!(-x)^i
\]
then
\[
B\left(\frac{1}{1+x}\right)=\frac{1}{1+x} \quad \Leftrightarrow
\quad S(\delta)=\frac{1}{1+x}
\]
Recall that \[S^{-1}=\left((-1)^{i-j}\left[\begin{smallmatrix}i+1\\
j+1\end{smallmatrix}\right]\right)_{i,j\in\N}\] where $\displaystyle{\left[\begin{smallmatrix}i\\
j\end{smallmatrix}\right]}$ denote the Stirling numbers of the
first kind. Since
\[
\left[\begin{smallmatrix}i\\
0\end{smallmatrix}\right]=0\qquad \forall i\geq1 \qquad \text{
and} \qquad
\sum_{j=0}^i\left[\begin{smallmatrix}i\\
j\end{smallmatrix}\right]=i!
\]
Hence
\[
S^{-1}\left(\frac{1}{1+x}\right)=\delta(x)
\]
and then
\[
B\left(\frac{1}{1+x}\right)=\frac{1}{1+x}
\]
See pages 259-264 in \cite{Knuth} for the properties of Stirling
numbers used above.

 So we have proved that $1$ is an eigenvalue and that
$\displaystyle{\frac{1}{1+x}}$ is an associated eigenvector, and
then $\displaystyle{\left\{\frac{c}{1+x}, \ c\in\K\right\}}$ is
contained in the corresponding eigenspace.

We consider the finite central $(m+1)\times (m+1)$ submatrices
\[B_{m}=(b_{i,j})_{i,j=0\dots m}\] of the infinite matrix
$B$. Note that, in every $B_m$ the eigenspace associated to the
eigenvalue $1$ has always dimension $1$, because the entries in
the main diagonal are all different. To prove that there is not
any other eigenvector for $B$ associated to $1$, we suppose that
$\displaystyle{\eta(x)=\sum_{n\geq0}\eta_nx^n\neq\frac{c}{1+x}}$,
$c\in\K$ is one such eigenvector. This means that there is an
$l\in\N$, $l\geq1$ such that
\[
\sum_{j=0}^l\eta_jx^j\neq c\sum_{j=0}^l(-x)^j, \qquad \text{for
any} \qquad c\in\K.
\]
So, $\displaystyle{(\eta_j)_{j=0\cdots l}}$,
$\displaystyle{((-1)^j)_{j=0\cdots l}}$ are eigenvector associated
to $1$ for the matrix $B_l$. This is impossible because they are
linearly independent.
\end{proof}

The first consequence we obtain is that the invariance of the
\textit{Euler Characteristic by successive barycentric subdivision
is not a Topology matter}. Our proof is the following: Let
$\mathcal{F}$ be a finite simplicial complex. Denote by
$sd^{(k)}(\mathcal{F})$ the $k$-th barycentric subdivision of
$\mathcal{F}$. Using Proposition \ref{P:bary} we get
\[
f^{sd^{(k)}(\mathcal{F})}=f^{\mathcal{F}}B^k.
\]
So,
\[
\chi(sd^{(k)}(\mathcal{F}))=\chi\left(\frac{1}{1+x},sd^{(k)}(\mathcal{F})\right)=
\chi\left(B^k\left(\frac{1}{1+x}\right),\mathcal{F}\right)=
\chi\left(\frac{1}{1+x},\mathcal{F}\right)=\chi(\mathcal{F})
\]
In the above proof we only used that
$\displaystyle{\frac{1}{1+x}}$ is an eigenvector for the matrix
$B$, and then for $B^k$, associated to the eigenvalue $1$. As a
consequence of the fact that
$\displaystyle{\left\{\frac{1}{1+x}\right\}}$ is a base for the
eigenspace associated to $1$ we get

\begin{thm}
The unique linear combinations which are invariants under
barycentric subdivisions in the class of all dimensional simplices
are the multiples of the Euler characteristic. In particular,
$\displaystyle{\frac{c}{1+x}}$ are the unique series which are
topologically invariants in the class of finite simplicial
complexes.
\end{thm}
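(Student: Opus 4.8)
The plan is to reduce the barycentric-subdivision invariance of a series $\g$ to the eigenvalue equation $B(\g)=\g$ and then to invoke the preceding Proposition, which identifies the eigenspace of $B$ for the eigenvalue $1$. The starting point is the transpose relation between the action of $B$ on coefficient vectors and its effect on the linear combination $\chi(\g,\cdot)$. Writing $b_{i,j}$ for the entries of $B$ and using $f_j^{sd(\mathcal{F})}=\sum_i f_i^{\mathcal{F}}b_{i,j}$ from Proposition~\ref{P:bary}, a direct interchange of summation yields
\[
\chi(\g,sd(\mathcal{F}))=\sum_j\g_j\sum_i f_i^{\mathcal{F}}b_{i,j}=\sum_i f_i^{\mathcal{F}}\sum_j b_{i,j}\g_j=\chi(B(\g),\mathcal{F}),
\]
valid for every finite simplicial complex $\mathcal{F}$. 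Thus asking that $\g$ be invariant under barycentric subdivision on the class of all simplices is exactly asking that $\chi(B(\g)-\g,\Delta_n)=0$ for every $n\geq0$.

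First I would translate this family of equations into a statement about the matrix $F$ of $f$-vectors. Since the $n$-th row of $F=T\!\left(\frac{1}{1-x}\mid1-x\right)$ is precisely $f^{\Delta_n}$, the quantity $\chi(B(\g)-\g,\Delta_n)$ is the $n$-th coefficient of $F(B(\g)-\g)$. Hence the invariance condition is equivalent to $F(B(\g)-\g)=0$. Because $F$ is an element of the Riordan group it is a $\K$-linear isomorphism of $\K[[x]]$, so it is injective and we obtain $B(\g)=\g$. At this point the previous Proposition applies and forces $\g=\frac{c}{1+x}$ for some $c\in\K$; the converse inclusion, that every such series is invariant, is already contained in the computation preceding the statement. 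This settles the first assertion.

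For the second assertion I would argue that topological invariance is a stronger hypothesis than barycentric-subdivision invariance on the class of simplices. Indeed, the geometric realizations $|\Delta_n|$ and $|sd(\Delta_n)|$ coincide as topological spaces, hence are homeomorphic, so any topologically invariant series $\g$ automatically satisfies $\chi(\g,sd(\Delta_n))=\chi(\g,\Delta_n)$ for all $n$. Applying the first part gives $\g=\frac{c}{1+x}$, while the homotopy (hence topological) invariance of the Euler characteristic supplies the reverse inclusion.

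The only delicate point I anticipate is the passage from the infinitely many scalar conditions $\chi(B(\g)-\g,\Delta_n)=0$ to the single operator identity $B(\g)=\g$: this is where the invertibility of $F$, and therefore the Pascal-triangle structure established earlier, does the essential work, since without injectivity of $F$ one could not rule out a nonzero series $\psi=B(\g)-\g$ annihilated by every $f$-vector $f^{\Delta_n}$. Everything else is a formal manipulation of the already-established matrix identities.
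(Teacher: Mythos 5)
Your proposal is correct and takes essentially the same approach as the paper: both reduce invariance under barycentric subdivision on the class of simplices to the eigenvalue equation $B(\gamma)=\gamma$ via the invertibility of the Riordan matrix $T\left(\frac{1}{1-x}\Big|1-x\right)$, then invoke the preceding Proposition identifying the eigenspace of $1$ as $\left\{\frac{c}{1+x},\ c\in\K\right\}$, and settle the topological claim from the homeomorphism between $|\mathcal{F}|$ and $|sd(\mathcal{F})|$. The only difference is presentational: you spell out the transpose identity $\chi(\gamma,sd(\mathcal{F}))=\chi(B(\gamma),\mathcal{F})$ and the injectivity of $F$ explicitly, which the paper leaves implicit in the equation $T\left(\frac{1}{1-x}\Big|1-x\right)(\gamma)=T\left(\frac{1}{1-x}\Big|1-x\right)B(\gamma)$.
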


\begin{proof}
Let  $\displaystyle{\g(x)=\sum_{n\geq0}\g_nx^n}$ be a series which
is invariant under barycentric subdivisions in the class of all
dimensional simplices. This implies that
\[
T\left(\frac{1}{1-x}\Big|1-x\right)(\g)=T\left(\frac{1}{1-x}\Big|1-x\right)B(\g)
\]
this is equivalent to
\[
\g=B(\g)
\]
Consequently
\[
\g=\frac{c}{1+x}\qquad \text{ for some } \qquad c\in\K
\]
The second part follows immediately because the polyhedra
$|\mathcal{F}|$ and $|sd(\mathcal{F})|$ are always homeomorphic.
\end{proof}

\begin{rmk}
The above proof can be also applied to the more restrictive
framework of PL-Topology.
\end{rmk}

%
%
%
%
%
%
%

 {\bf Acknowledgment:} The second author was
partially supported by DGES grant MTM-2009-07030.


\begin{thebibliography}{99}

\begin{small}




\bibitem{Brenti}
{F. Brenti and V. Welker } {\it $f$-Vectors of barycentric
subdivisions. } {Math. Z. 259} (2008) {849-865}.


%
%
%
%
%
%
\bibitem{Cohen}
{M. M. Cohen. } {\it A Course in Simple-Homotopy Theory. GTM 10}
{Springer-Verlag.} 1973.


\bibitem{Eppstein}
{D. Eppstein. } {\it "Nineteen Proofs of Euler's Formula:
V-E+F=2".} {http://www.ics.uci.edu/~eppstein/junkyard/euler/}


\bibitem{Forman} {R. Forman.} {\it The Euler Characterisric Is the Unique Locally Determined Numerical Invariant
of Finite Simplicial Complexes which Assigns the Same Number to
Every Cone.} {Discrete Comput. Geom.} {23} {2000} (485-488).



%
%
%
%
%






\bibitem{Knuth}
{R. Graham, D. Knuth and O. Patashnik. } {\it Concrete
Mathematics. } {Addison-Wesley.} 1989.


\bibitem{Hatcher}
{A. Hatcher. } {\it Algebraic Topology. } { Cambridge University
Press.} 2002.

\bibitem{Levitt} {N. Levitt.} {\it The Euler Characterisric
Is the Unique Locally Determined Numerical Homotopy Invariant of
Finite Complexes.} {Discrete Comput. Geom.} {7} {1992} (59-67).



\bibitem{2ways}
{A. Luzón. } {\it Iterative processes related to Riordan arrays:
The reciprocation and the inversion of power series. } {Discrete
Math. 310} (2010) {3607-3618}.

\bibitem{teo}
{A. Luzón and M. A. Morón.} {\it Ultrametrics, Banach's fixed point
theorem and the Riordan group. } {Discrete Appl. Math. 156} (2008)
{2620-2635}.


\bibitem{BanPas}
{A. Luzón and M. A. Morón.} {\it Riordan matrices in the
reciprocation of quadratic polynomials. } {Linear Algebra Appl. }{
430} { (2009) } {2254-2270}.

\bibitem{poly}
{A.\ Luz\'on and M.\ A.\ Mor\'on.} {\it Recurrence relations for
polynomial sequences via Riordan matrices.} {Linear Algebra Appl.}
433 (2010) 1422--1446.

%
%
%

\bibitem{Mayer}
{W. Mayer.} {\it A new homology theory. II }{ Ann. of Math.}
{43(3)} (1942) {594-605}.





%



\bibitem{Roberts2002}
{J. Roberts} {\it Unusual formulae for the Euler characteristic. }
{J. Knot Theory Ramifications 11} (2002) {793-796}.



%
%
%
%
%

\bibitem{Sha91}
{L. W. Shapiro, S. Getu, W.J. Woan and L. Woodson.} {\it The Riordan
group. } {Discrete Appl. Math. 34} (1991) {229-239}.

%
%
%

\bibitem{Spr94}
{R. Sprugnoli.} {\it Riordan arrays and combinatorial sums. }
{Discrete Math. 132} (1994) {267-290}.


\bibitem{Stanley}
{R. P. Stanley.} {\it Combinatorics and Commutative Algebra. }
{Progress in Mathematics. 41} {Birkhäuser. Boston.} 1996.


\bibitem{Yu}
{L. Yu.} {\it A property that characterizes Euler Characteristic
among invariants of combinatorial manifolds. } {Adv. in Math. 225}
(2010) {794-804}.

\bibitem{Zielgler}
{G. M. Ziegler.} {\it Lectures on Polytopes. } {Graduate Texts in
Mathematics. 152} {Springer-Verlag. New York.} 1995.


\end{small}

\end{thebibliography}
\end{document}